\newtheorem{thm}{Theorem}
\newtheorem{lem}{Lemma}
\newtheorem{dfn}{Definition}
\newcommand{\C}{\mathbb{C}}
\newcommand{\<}{\langle}
\renewcommand{\>}{\rangle}
\newcommand{\cU}{\mathcal{U}}
\newcommand{\cV}{\mathcal{V}}
\begin{document}
\title{An inertial  upper bound for the quantum independence number of a graph}

\author{Pawel Wocjan\thanks{\texttt{wocjan@cs.ucf.edu}, Department of Computer Science, University of Central Florida, USA}  \quad Clive Elphick\thanks{\texttt{clive.elphick@gmail.com}, School of Mathematics, University of Birmingham, Birmingham, UK}}

\maketitle

\begin{abstract}
A  well known upper bound for the independence number $\alpha(G)$ of  a graph $G$, is that 
\[
\alpha(G) \le n^0 + \min\{n^+ , n^-\},
\]
where $(n^+, n^0, n^-)$ is the inertia of $G$. We prove that this bound is also an upper bound for the quantum independence number $\alpha_q$(G), where $\alpha_q(G) \ge \alpha(G)$. We identify numerous graphs for which $\alpha(G) = \alpha_q(G)$ and demonstrate that there are graphs for which the above bound is not exact with any Hermitian weight matrix, for $\alpha(G)$ and $\alpha_q(G)$.  This result complements results by the authors that many spectral lower bounds for the chromatic number are also lower bounds for the quantum chromatic number.

\end{abstract}

\section{Introduction}

Wocjan and Elphick \cite{wocjan18} proved that many spectral lower bounds for the chromatic number, $\chi(G)$, are also lower bounds for the quantum chromatic number, $\chi_q(G)$. This was achieved using pinching and twirling and a combinatorial definition of $\chi_q(G)$ due to Mancinska and Roberson \cite{mancinska16}.

In a different paper Mancinska and Roberson \cite{mancinska162} defined a quantum independence number $\alpha_q(G)$, using quantum homomorphisms. It is known (see for example Section 6.18 of \cite{roberson13}) that:
\[
\alpha(G) \le \alpha_q(G) \le \lfloor \theta'(G) \rfloor \le \theta'(G) \le \theta(G) \le \theta^+(G) \le \lceil \theta^+(G) \rceil \le \chi_q(\overline{G}) \le \chi(\overline{G})\,,
\]
where $\theta', \theta, \theta^+$ are the Schrijver, Lov\'asz and Szegedy theta functions. 

Analogously to $\chi_q(G)$, $\alpha_q(G)$ is the maximum integer $t$ for which two players sharing an entangled state can convince a referee that the graph $G$ has an independent set of size $t$. There exist graphs $G$ for which there is an exponential separation between $\alpha(G)$ and $\alpha_q(G)$ \cite{mancinska162}. 

\section{Inertial  upper bound for the independence number}

The best known spectral upper bound for $\alpha(G)$ is as follows:

\begin{equation}\label{eq:alpha}
\alpha(G) \le n^0(W) + \min\{n^+(W) , n^-(W)\},
\end{equation}
where $W$ is a Hermitian weighted adjacency matrix of $G$ and $n^0, n^+, n^-$ are the numbers of zero, positive and negative eigenvalues of $W$ respectively. We also let $A$ denote the adjacency matrix of $G$, and $V(G)$ and $E(G)$ denote the vertex set and edge set of $G$ respectively, where $|V(G)| = n$ and $|E(G)| = m$. Obviously $n = n^0 + n^- + n^+$ and $rank(G) = n^+ + n^-$.

This bound was originally proved by Cvetkovic \cite{cvetkovic73}, using interlacing of the eigenvalues of the empty graph on $\alpha(G)$ vertices.  Godsil \cite{godsil06} presented an alternative proof that we generalize to prove the inertial upper bound on the quantum independence number.  His proof relies on the following elementary result:

\begin{lem}\label{lem:max_iso}
Let $M\in \C^{s\times s}$ be an arbitrary Hermitian matrix.  A subspace $\mathcal{U}$ of $\C^s$ is called totally isotropic with respect to the Hermitian form defined by $M$ if
\[
\< \Psi | M | \Psi\> = 0
\]
for all vectors $|\Psi\>\in\mathcal{U}$.  The dimension of all maximal totally isotropic subspaces is equal to 
\begin{equation}
\label{eq:dimension}
n^0(M) + \min\{ n^+(M), n^-(M) \}\,.
\end{equation}
\end{lem}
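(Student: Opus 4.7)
My plan is to establish the two matching bounds separately, after reducing to the case in which $M$ is diagonal via the spectral theorem. Since $M$ is Hermitian, we may choose an orthonormal eigenbasis $|e_1\>,\dots,|e_s\>$ with eigenvalues $\lambda_1,\dots,\lambda_s\in\mathbb{R}$, and partition the indices into $I^+$, $I^0$, $I^-$ according to the sign of $\lambda_i$. Write $V^\pm=\mathrm{span}\{|e_i\>:i\in I^\pm\}$ and $V^0=\ker M$, so that $\mathbb{C}^s=V^+\oplus V^0\oplus V^-$ and $|I^\pm|=n^\pm(M)$, $|I^0|=n^0(M)$. Assume without loss of generality that $n^+(M)\le n^-(M)$.

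\textbf{Lower bound (construction).} I would exhibit an explicit totally isotropic subspace of dimension $n^0(M)+n^+(M)$. Take all of $V^0$, which is automatically totally isotropic since $M$ annihilates it; then pick any injection $\sigma:I^+\hookrightarrow I^-$ and, for each $i\in I^+$, form the vector
\[
|\phi_i\>=\sqrt{-\lambda_{\sigma(i)}}\,|e_i\>+\sqrt{\lambda_i}\,|e_{\sigma(i)}\>.
\]
A direct computation gives $\<\phi_i|M|\phi_i\>=-\lambda_{\sigma(i)}\lambda_i+\lambda_i\lambda_{\sigma(i)}=0$, and because the supports $\{i,\sigma(i)\}$ are pairwise disjoint and the $|e_j\>$ are $M$-orthogonal, one also gets $\<\phi_i|M|\phi_j\>=0$ for $i\ne j$ and $\<v|M|\phi_i\>=0$ for any $|v\>\in V^0$. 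Polarization then yields total isotropy on the whole span. Linear independence of the $|\phi_i\>$ together with $V^0$ follows from the disjoint support structure, giving dimension $n^0(M)+n^+(M)$.

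\textbf{Upper bound.} Let $\mathcal{U}$ be any totally isotropic subspace and consider the restriction to $\mathcal{U}$ of the orthogonal projection $P^+:\mathbb{C}^s\to V^+$. The key observation is that the kernel of $P^+|_{\mathcal{U}}$ lies inside $V^0$: indeed, if $|\psi\>\in\mathcal{U}$ satisfies $P^+|\psi\>=0$, then $|\psi\>\in V^0\oplus V^-$ and hence $\<\psi|M|\psi\>=\sum_{i\in I^-}\lambda_i|\<e_i|\psi\>|^2\le 0$, with equality forcing every coefficient on $V^-$ to vanish, i.e.\ $|\psi\>\in V^0$. Since $\mathcal{U}$ is totally isotropic, $\<\psi|M|\psi\>=0$, so $|\psi\>\in V^0$ as claimed. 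By rank--nullity,
\[
\dim\mathcal{U}\le \dim\ker(P^+|_{\mathcal{U}})+\dim\mathrm{image}(P^+|_{\mathcal{U}})\le n^0(M)+n^+(M).
\]
Combined with the construction, this shows that every maximal totally isotropic subspace has the claimed dimension~\eqref{eq:dimension}.

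\textbf{Expected obstacle.} The only delicate point is verifying that the constructed vectors $|\phi_i\>$, together with $V^0$, are genuinely totally isotropic as a subspace, not merely individually null under $M$; this requires checking the cross terms $\<\phi_i|M|\phi_j\>$, which works because the injection $\sigma$ gives disjoint index pairs. The upper bound itself, once one hits on projecting onto the smaller of $V^+,V^-$, is essentially bookkeeping with rank--nullity.
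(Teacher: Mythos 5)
Your proof is correct and follows essentially the same route as the paper's: both reduce to the spectral decomposition $\C^s = V^+\oplus V^0\oplus V^-$, build the maximal isotropic subspace by pairing positive with negative eigenvectors on top of $\ker M$, and obtain the matching upper bound by a dimension count against the larger-signature eigenspace. The only cosmetic differences are that the paper first normalizes the eigenvalues to $\pm 1$ and $0$ via Sylvester's law of inertia (so no square-root weights are needed in the construction) and organizes the upper bound through $\dim(\cU+\cV)$ with $\cV=V^+$ rather than through rank--nullity of the projection $P^+|_{\cU}$.
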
 

\begin{proof}
Using Sylvester's law of inertia we may assume that $M$ is diagonal and has only eigenvalues $+1$, $0$, and $-1$. It is easy to show that there exists a totally isotropic subspace having dimension as in (\ref{eq:dimension}). Let $|\varphi^+_{a}\>$, $|\varphi^0_{b}\>$, $|\varphi^-_{c}\>$ denote the eigenvectors and the corresponding eigenvalues $+1$, $0$ and $-1$, respectively, where $a\in[n^+], b\in [n^0]$, and $c\in [n^-]$.  We may assume without loss of generality that $n^+\ge n^-$.
Then the vectors
\begin{align*}
|\varphi^0_b\>\,,                                 & \quad b\in [n^0]\,, \\
|\varphi^{+}_c\> + |\varphi^{-}_c\>\,, & \quad c\in [n^-]
\end{align*}
span a totally isotropic subspace of dimension $n^0+n^-$.  

We now show that there cannot exist a totally isotropic subspace whose dimension is larger than $n^0+n^-$. Let $\mathcal{U}$ be any totally isotropic subspace. Let $\mathcal{V}$ be the subspace spanned by $|\varphi^+_a\>$ for $a\in [n^+]$.  We have
\begin{eqnarray*}
n^+ + n^0 + n^- 
& = &
s \\
& \ge & 
\dim(\cU + \cV) \\
& =   &
\dim(\cU) + \dim(\cV) + \dim(\cU \cap \cV) \\
& = &
\dim(\cU) + n^+\,,
\end{eqnarray*}
which shows that $\dim(\cU)$ cannot be larger than $n^0 + n^-$.
\end{proof}

\begin{proof}
The inertial upper bound on $\alpha$ is established as follows.
Let $G$ be a graph with vertex set $V$ and Hermitian weighted adjacency matrix $W$. Let $e_u$ denote the standard basis vector corresponding to vertex $u$. If $S$ is an independent set in $V$ and $u, v \in S$, then

\[
\<e_u | W | e_v\> = 0.
\]

It follows that the subspace spanned by the orthogonal vectors $e_u$ for $u$ in $S$ is a totally isotropic subspace. The dimension of such a subspace is bounded by the inertia of $W$, as shown in Lemma~\ref{lem:max_iso}. 
\end{proof}

\section{Inertial upper bound for the quantum independence number}

\begin{thm}\label{thm:inertial_bound}
For any graph $G$ with quantum independence number $\alpha_q(G)$ and Hermitian weighted adjacency matrix $W$:

\begin{equation}\label{eq:alphaq}
\alpha_q(G) \le n^0(W) + \min\{n^+(W) , n^-(W)\}.
\end{equation}
\end{thm}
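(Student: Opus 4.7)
The plan is to adapt Godsil's argument by replacing the standard basis vectors indexed by a classical independent set with ``quantum vertices'' built from the projectors of an optimal quantum independence strategy, then apply Lemma~\ref{lem:max_iso} on a tensor-product dilation of $W$. I will use the standard operator reformulation obtained from the quantum-homomorphism definition of $\alpha_q$ with a maximally entangled shared state: $\alpha_q(G) \ge t$ iff there exist a positive integer $d$ and projectors $\{P_{i,u} : i \in [t],\ u \in V(G)\}$ on $\C^d$ such that (i) for each $i$, $\{P_{i,u}\}_u$ is a PVM, so $\sum_u P_{i,u} = I_d$ and $P_{i,u} P_{i,v} = \delta_{u,v} P_{i,u}$; (ii) $P_{i,u} P_{j,u} = 0$ for all $i \ne j$ and $u \in V(G)$; and (iii) $P_{i,u} P_{j,v} = 0$ whenever $i \ne j$ and $uv \in E(G)$.

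Given such projectors, I would pass to the dilated space $\C^n \otimes \C^d$ equipped with the Hermitian form $W \otimes I_d$, whose inertia is $d \cdot (n^+(W), n^0(W), n^-(W))$. For each $i \in [t]$ I would define the ``quantum vertex'' $V_i : \C^d \to \C^n \otimes \C^d$ by $V_i |\psi\> := \sum_{u} e_u \otimes P_{i,u} |\psi\>$. Property (i) gives $V_i^\dagger V_i = \sum_u P_{i,u} = I_d$, so $\cU_i := V_i(\C^d)$ is a $d$-dimensional subspace; property (ii) gives $V_i^\dagger V_j = \sum_u P_{i,u} P_{j,u} = 0$ for $i \ne j$, so the $\cU_i$ are pairwise orthogonal and $\cU := \bigoplus_{i=1}^t \cU_i$ has dimension $td$.

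The final step would be to verify that $\cU$ is totally isotropic for $W \otimes I_d$ by computing
\[
V_i^\dagger (W \otimes I_d) V_j \;=\; \sum_{u,v \in V(G)} W_{u,v}\, P_{i,u} P_{j,v}.
\]
Diagonal terms $(u = v)$ vanish because $W$ has zero diagonal (exactly as in Godsil's classical proof); off-diagonal terms with $W_{u,v} \ne 0$ force $uv \in E(G)$, and then $P_{i,u} P_{j,v} = 0$ either by (iii) if $i \ne j$ or by the PVM orthogonality in (i) if $i = j$. Applying Lemma~\ref{lem:max_iso} to $M = W \otimes I_d$ then yields $td = \dim \cU \le d \bigl( n^0(W) + \min\{n^+(W), n^-(W)\} \bigr)$, and dividing by $d$ gives the claimed bound. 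The main obstacle will be identifying the correct ``quantization'' of an independent set: the tensor dilation $W \otimes I_d$ together with the isometries $V_i$ must be chosen so that the PVM consistency condition (ii) delivers orthogonality of the $\cU_i$ (securing the dimension count $td$), while the edge-independence condition (iii) delivers isotropy; neither condition alone suffices.
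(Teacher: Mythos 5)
Your proof is correct, and it rests on the same two pillars as the paper's: a totally isotropic subspace of $\C^n\otimes\C^d$ for the Hermitian form $W\otimes I_d$, followed by Lemma~\ref{lem:max_iso} and division by $d$. The difference is organizational but genuine. The paper first collapses the $t$ measurements into a single projector per vertex, $P^{(u)}=\sum_{i\in[t]}P^{(u,i)}$ --- exactly the step that proves $\alpha_q(G)\le\alpha_p(G)$ for the projective packing number --- and then bounds $\alpha_p$ by the inertia bound using the composite eigenvectors $|u\>\otimes|\psi^{(u,k)}\>$, so its isotropic subspace is organized by graph vertex. You instead keep the measurement index and package each PVM into an isometry $V_i$ whose ranges are pairwise orthogonal by the consistency condition, so your subspace is organized by the $t$ ``quantum vertices''; in fact the two subspaces coincide, since your $td$-dimensional space sits inside the paper's $\bigoplus_u |u\>\otimes\mathrm{range}(P^{(u)})$, which also has dimension $td$. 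Both arguments require the zero diagonal of $W$, which you state explicitly and the paper uses implicitly. What your shortcut forgoes is the intermediate Theorem~\ref{thm:inertial_bound_projective_packing}: routing through $\alpha_p$ shows that the inertia bound also caps the (possibly non-integral) projective packing number, a fact the paper exploits later to show the bound cannot always be attained for $\alpha_q$ or $\alpha_p$. Your direct argument is leaner but yields only the statement at hand.
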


In order to prove Theorem 1 we need a combinatorial definition of $\alpha_q$. Laurent and Piovesan provide a combinatorial definition of $\alpha_q$ in Definition 2.5 of \cite{laurent15}, but we use the equivalent definition described in the paragraph after their Definition 2.8. 

For matrices $X,Y\in\C^{d\times d}$, their trace inner product (also called Hilbert-Schmidt inner product) is defined as
\[
\<X, Y\>_{\mathrm{tr}} = \mathrm{tr}(X^\dagger Y)\,.
\]

\begin{dfn}[Quantum independence number $\alpha_q$]\label{def:alpha_q}
For a graph $G$, $\alpha_q(G)$ is the maximum integer $t$ for which there exist orthogonal projectors $P^{(u,i)} \in\C^{d\times d}$ for $u \in V(G), i \in[t]$ satisfying the following conditions:
\begin{align}
\sum_{u\in V(G)} P^{(u,i)} = I_d    & \quad \mbox{for all } i \in [t]\quad & \label{eq:complete} \\
\<P^{(u,i)} , P^{(u,j)}\>_{\mathrm{tr}} = 0  & \quad \mbox{for all } i \not= j \in [t], \mbox{ for all } u \in V(G) & \label{eq:orthogonal u} \\
\<P^{(u,i)} , P^{(v,j)}\>_{\mathrm{tr}} = 0  & \quad \mbox{for all } i \not= j \in [t], \mbox{ for all } uv \in E(G).\quad & \label{eq:orthogonal uv}
\end{align}
We refer to condition (\ref{eq:complete}) as the \emph{completeness} condition and to conditions (\ref{eq:orthogonal u}) and (\ref{eq:orthogonal uv}) as the \emph{orthogonality} conditions.\footnote{Deviating slightly from \cite{laurent15}, we present the cases $u=v$ and $uv\in E$ as separate conditions to simplify the presentation below.}
\end{dfn}

Observe that the (classical) independence number $\alpha(G)$ is a special case of $\alpha_q(G)$ when the dimension $d$ is restricted to be $1$, that is, the only possible ``projectors'' are the scalars $1$ and $0$. More precisely, let $S=\{u_i \mid i \in [t]\}$ be any (classical) independent set. Then, for all $i\in [t]$, we set $P^{(u_i,i)}=1$ and $P^{(v,i)}=0$ for all $v\in V$ with $v\neq u_i$.

Mancinska \emph{et al} \cite{mancinska161} defined the projective packing number, $\alpha_p(G)$, as follows, and noted that for all graphs $\alpha_q(G) \le \alpha_p(G)$.
For the sake of completeness, we include the simple proof showing that the quantum independence number is bounded from above by the projective packing number. We show afterwards that the projective packing number is bounded from above by the inertia bound. It is more convenient to work with the projective packing number than with the quantum independence number.

\begin{dfn}
A $d$-dimensional projective packing of a graph $G=(V,E)$ is a collection of orthogonal projectors $P^{(u)}\in\C^{d\times d}$ such that
\begin{equation}\label{eq:projective_orthogonal}
\< P^{(u)}, P^{(v)} \>_{\mathrm{tr}} = 0
\end{equation}
for all $uv\in E$.
The value of a projective packing using projectors $P^{(u)}\in\C^{d\times d}$ is defined as
\begin{equation}
\frac{1}{d} \sum_{u\in V} r^{(u)}\,,
\end{equation}
where $r^{(u)}$ denote the ranks of the operators $r^{(u)}$.
The projective packing number $\alpha_p(G)$ of a graph $G=(V,E)$ is defined as the supremum of the values over all projective packings of the graph $G$.
\end{dfn}

\begin{lem}
For all graphs, we have $\alpha_q(G)\le \alpha_p(G)$.
\end{lem}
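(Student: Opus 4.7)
The plan is to take a witness for $\alpha_q(G)=t$, namely the projectors $P^{(u,i)}\in\C^{d\times d}$ of Definition~\ref{def:alpha_q}, and produce a projective packing with value at least $t$ by summing over the ``color'' index $i$. Concretely, I would define
\[
P^{(u)} \;:=\; \sum_{i\in[t]} P^{(u,i)}
\]
for each $u\in V(G)$, and argue that the collection $\{P^{(u)}\}_{u\in V}$ is a projective packing of value $t$, which immediately yields $\alpha_p(G)\ge t=\alpha_q(G)$.

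First, I would collect two elementary consequences of Definition~\ref{def:alpha_q}. (a) For any two orthogonal projectors $P,Q$ with $\langle P,Q\rangle_{\mathrm{tr}}=\mathrm{tr}(PQ)=0$, the positive-semidefiniteness of $PQP$ forces $PQ=0$; hence the orthogonality condition (\ref{eq:orthogonal u}) upgrades to $P^{(u,i)}P^{(u,j)}=0$ for $i\neq j$, and (\ref{eq:orthogonal uv}) likewise upgrades to $P^{(u,i)}P^{(v,j)}=0$ for $uv\in E$ and $i\neq j$. (b) Applying the same observation to the completeness condition (\ref{eq:complete}): since $\sum_u P^{(u,i)}=I_d$, multiplying by $P^{(v,i)}$ and taking trace shows that $\{P^{(u,i)}\}_{u\in V}$ is, for each fixed $i$, a resolution of the identity by pairwise orthogonal projectors; in particular $\sum_u \mathrm{rank}(P^{(u,i)})=d$ and $P^{(u,i)}P^{(v,i)}=0$ whenever $u\neq v$.

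Step (a) applied to distinct $i,j$ at the same vertex $u$ means the operators $\{P^{(u,i)}\}_i$ have pairwise orthogonal ranges, so $P^{(u)}$ is genuinely an orthogonal projector of rank $\sum_i \mathrm{rank}(P^{(u,i)})$. To verify the projective packing condition, I would expand
\[
\langle P^{(u)},P^{(v)}\rangle_{\mathrm{tr}} \;=\; \sum_{i,j\in[t]} \langle P^{(u,i)},P^{(v,j)}\rangle_{\mathrm{tr}}
\]
for an edge $uv\in E$. The off-diagonal terms $i\neq j$ vanish by (\ref{eq:orthogonal uv}), while the diagonal terms $i=j$ vanish by the $u\neq v$ case of observation~(b). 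Hence $\langle P^{(u)},P^{(v)}\rangle_{\mathrm{tr}}=0$ for all edges, confirming the packing property (\ref{eq:projective_orthogonal}).

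Finally, summing ranks,
\[
\frac{1}{d}\sum_{u\in V} \mathrm{rank}(P^{(u)}) \;=\; \frac{1}{d}\sum_{u\in V}\sum_{i\in[t]} \mathrm{rank}(P^{(u,i)}) \;=\; \frac{1}{d}\sum_{i\in[t]} d \;=\; t,
\]
so this packing attains value $t$ and $\alpha_p(G)\ge t$. The only nontrivial step is the bootstrap from trace-orthogonality to operator-orthogonality described in observation (a); everything else is routine bookkeeping. I do not expect real obstacles, but I would need to be careful that summing the $P^{(u,i)}$ over $i$ (rather than trying to use a single fixed $i$) is what makes the argument go through, because the defining conditions (\ref{eq:orthogonal u})--(\ref{eq:orthogonal uv}) only control pairs with $i\neq j$, so the packing orthogonality for edges requires the completeness condition to handle the $i=j$ contribution.
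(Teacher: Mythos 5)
Your proposal is correct and follows essentially the same route as the paper's own proof: define $P^{(u)}=\sum_i P^{(u,i)}$, use condition~(\ref{eq:orthogonal u}) to see it is a projector, split the edge inner product into $i\neq j$ terms (killed by~(\ref{eq:orthogonal uv})) and $i=j$ terms (killed by completeness~(\ref{eq:complete})), and sum ranks to get value $t$. The only difference is that you spell out the bootstrap from trace-orthogonality to operator-orthogonality, which the paper leaves implicit.
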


\begin{proof}
Let $P^{(u,i)}\in\C^{d\times d}$, $u\in V, i \in [t],$ be a collection of orthogonal projectors satisfying the conditions in Definition~\ref{def:alpha_q}. Define the operators
\begin{equation}
P^{(u)} = \sum_{i\in [t]} P^{(u,i)}\,.
\end{equation}
These operators are orthogonal projectors because of condition~(\ref{eq:orthogonal u}).  For $uv\in E$, we have
\begin{eqnarray}
\< P^{(u)}, P^{(v)}\>_{\mathrm{tr}} & = & \sum_{i,j\in[t]} \< P^{(u,i)}, P^{(v,j)}\>_{\mathrm{tr}} \\
& = &
\sum_{i\neq j\in[t]} \< P^{(u,i)}, P^{(v,j)}\>_{\mathrm{tr}} + \sum_{i\in[t]} \< P^{(u,i)}, P^{(v,i)}\>_{\mathrm{tr}}  \label{eq:middle} \\
& = &
0\,.
\end{eqnarray}
The sums in (\ref{eq:middle}) are equal to zero because of conditions~(\ref{eq:orthogonal uv}) and (\ref{eq:complete}), respectively.
We have
\[
\sum_{u\in V} r^{(u)} = \sum_{i\in [t]} \sum_{u\in V} \mathrm{rank}(P^{(u,i)}) = \sum_{i\in [t]} d = t \cdot d\,,
\]
because the projectors $P^{(u,i)}$ for each $i\in[t]$ add up to $I_d$ due to condition (\ref{eq:complete}). This concludes the proof $\alpha_q(G)\le \alpha_p(G)$.
\end{proof}

We will use the following result to reformulate the conditions on the orthogonal projectors of a projective packing as conditions on their eigenvectors.  We omit the proof of this basic result.
\begin{lem}\label{lem:elementary}
Let $P,Q\in\C^{d\times d}$ be two arbitrary orthogonal projectors of rank $r$ and $s$, respectively. Let
\[
P = \sum_{k\in[r]} |\psi_k\>\<\psi_k| \quad\mbox{and}\quad Q = \sum_{\ell\in[s]} |\phi_\ell\>\<\phi_\ell|
\]
denote their spectral resolutions, respectively.  Then, the following two conditions are equivalent:
\begin{align}
\< P, Q\>_{\mathrm{tr}} = 0 & \\
\<\psi_k | \phi_\ell \> = 0 & \quad \mbox{for all}\quad k\in[r], \ell\in[s]\,.
\end{align}
\end{lem}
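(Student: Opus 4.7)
The plan is a direct computation: expand the trace inner product using the given spectral resolutions of $P$ and $Q$, and observe that the result is manifestly a sum of squared moduli of the pairwise inner products $\<\psi_k|\phi_\ell\>$.

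First I would use that $P$ is an orthogonal projector, hence Hermitian, so that $\<P, Q\>_{\mathrm{tr}} = \mathrm{tr}(P^\dagger Q) = \mathrm{tr}(PQ)$. Substituting the spectral resolutions, distributing the sums, and applying cyclicity of the trace together with the rank-one identity $\mathrm{tr}(|a\>\<b|) = \<b|a\>$ collapses the expression to
\[
\mathrm{tr}(PQ) = \sum_{k\in[r]}\sum_{\ell\in[s]} \<\phi_\ell|\psi_k\>\<\psi_k|\phi_\ell\> = \sum_{k,\ell} |\<\psi_k|\phi_\ell\>|^2.
\]

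Once this identity is in hand, both implications of the equivalence are immediate. The right-hand side is a sum of non-negative real numbers, so it vanishes if and only if each term vanishes, i.e.\ $\<\psi_k|\phi_\ell\> = 0$ for all $k\in[r]$ and $\ell\in[s]$.

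There is no real obstacle here; the lemma deserves its label ``elementary.'' The only conceptual point worth highlighting is that the passage from $\mathrm{tr}(PQ) = 0$ to term-by-term vanishing genuinely uses the positive-semidefiniteness of both $P$ and $Q$: it is exactly this which forces the expanded trace to consist of non-negative contributions, precluding any cancellation among cross terms. For arbitrary Hermitian $P, Q$ the analogous identity would involve signed summands and no such conclusion could be drawn from the vanishing of the total trace alone.
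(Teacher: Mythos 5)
Your proof is correct: the identity $\mathrm{tr}(PQ)=\sum_{k,\ell}|\<\psi_k|\phi_\ell\>|^2$ together with non-negativity of each summand gives both directions at once. The paper explicitly omits the proof of this lemma as ``basic,'' and your computation is exactly the standard argument the authors intend, so there is nothing to compare beyond noting that your closing remark about why positive-semidefiniteness is essential (and why the claim fails for general Hermitian $P,Q$) is a worthwhile addition.
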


\begin{thm}\label{thm:inertial_bound_projective_packing}
For any graph $G$ with projective packing number $\alpha_p(G)$ and Hermitian weighted adjacency matrix $W$:

\begin{equation}\label{eq:alphaq}
\alpha_p(G) \le n^0(W) + \min\{n^+(W) , n^-(W)\}.
\end{equation}
\end{thm}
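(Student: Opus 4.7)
The plan is to generalize the proof of the classical bound by lifting everything to the tensor product space $\C^n\otimes\C^d$ and applying Lemma~\ref{lem:max_iso} to the Hermitian form defined by $W\otimes I_d$. In the classical case one uses a single vector $e_u$ per vertex of the independent set; here the natural replacement is to attach one vector per eigenvector of the rank-$r^{(u)}$ projector $P^{(u)}$ to $e_u$ via a tensor product. The hope is that the orthogonality conditions built into the projective packing translate, via Lemma~\ref{lem:elementary}, into vanishing of all matrix elements of $W\otimes I_d$ on the subspace they span.

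Concretely, given a $d$-dimensional projective packing $\{P^{(u)}\}_{u\in V}$, I would take spectral decompositions $P^{(u)} = \sum_{k\in[r^{(u)}]} |\psi^{(u)}_k\>\<\psi^{(u)}_k|$ and form the vectors $|e_u\>\otimes |\psi^{(u)}_k\>$ in $\C^n\otimes\C^d$, indexed by pairs $(u,k)$ with $u\in V$ and $k\in[r^{(u)}]$. Because the $e_u$ are orthonormal in $\C^n$ and the $|\psi^{(u)}_k\>$ for fixed $u$ are orthonormal in $\C^d$, these vectors are pairwise orthogonal, hence linearly independent, and span a subspace $\cU$ of dimension $\sum_{u\in V} r^{(u)}$.

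Next I would show that $\cU$ is totally isotropic for $W\otimes I_d$ by verifying that every matrix element
\[
\< e_u\otimes\psi^{(u)}_k |\, W\otimes I_d \,| e_v\otimes\psi^{(v)}_\ell\> \;=\; W_{uv}\,\<\psi^{(u)}_k | \psi^{(v)}_\ell\>
\]
vanishes. There are three cases: if $u=v$ then $W_{uu}=0$ since $W$ is a weighted adjacency matrix; if $u\neq v$ and $uv\notin E$ then again $W_{uv}=0$; and if $uv\in E$ then $\<P^{(u)},P^{(v)}\>_{\mathrm{tr}}=0$ by (\ref{eq:projective_orthogonal}), which by Lemma~\ref{lem:elementary} forces $\<\psi^{(u)}_k|\psi^{(v)}_\ell\>=0$. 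Then Lemma~\ref{lem:max_iso} applied to $W\otimes I_d$ bounds $\dim(\cU)$. Since the eigenvalues of $W\otimes I_d$ are those of $W$ each repeated $d$ times, one has $n^{\epsilon}(W\otimes I_d) = d\cdot n^{\epsilon}(W)$ for $\epsilon\in\{0,+,-\}$, giving
\[
\sum_{u\in V} r^{(u)} \;\le\; d\bigl(n^0(W) + \min\{n^+(W),n^-(W)\}\bigr).
\]
Dividing by $d$ bounds the value of the packing, and taking the supremum over all projective packings yields the claimed bound on $\alpha_p(G)$.

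I do not expect serious obstacles: the construction of $\cU$ and the matrix-element computation are routine once one decides to tensor with $\C^d$. The only conceptual point worth underlining is why the tensor product with $I_d$ is the right Hermitian form, and this is dictated precisely by the shape of (\ref{eq:projective_orthogonal}): the trace inner product of projectors translates to a sum of scalar inner products of eigenvectors, which is exactly what appears when one evaluates $W\otimes I_d$ on product vectors. The zero-diagonal convention on the weighted adjacency matrix $W$ (implicit already in the classical proof, where one needs $\<e_u|W|e_u\>=0$) is what allows the $u=v$ case to go through.
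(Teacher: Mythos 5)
Your proposal is correct and follows essentially the same route as the paper: both form the product vectors $|e_u\>\otimes|\psi^{(u)}_k\>$ from the spectral resolutions of the packing projectors, verify via Lemma~\ref{lem:elementary} that they span a totally isotropic subspace for $W\otimes I_d$, and apply Lemma~\ref{lem:max_iso} together with the fact that the inertia of $W\otimes I_d$ is $d$ times that of $W$. Your explicit three-case check of the matrix elements (including the zero-diagonal convention on $W$) just spells out what the paper compresses into one sentence.
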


\begin{proof}
Let 
\[
P^{(u)} = \sum_{k\in [r^{(u)}]} |\psi^{(u,k)}\>\<\psi^{(u,k)}|
\]
denote the spectral resolution of $P^{(u)}$, where $r^{(u)}$ is its rank. 
Let 
\[
r = \sum_{u\in V} r^{(u)}\,.
\]
Define the composite vectors 
\begin{equation}
|\Psi^{(u,k)}\> = |u\> \otimes |\psi^{(u,k)}\>\,.
\end{equation}
For all $u,v\in V$, $k\in[r^{(u)}]$, and $\ell\in[r^{(v)}]$, we have
\begin{eqnarray}
\<\Psi^{(u,k)}|\Psi^{(v,\ell)}\>                       & = & \delta_{u,v} \cdot \delta_{k,\ell} \\
\<\Psi^{(u,k)}|(W\otimes I_d)|\Psi^{(v,\ell)}\>  & = & 0\,.
\end{eqnarray}
The above equalities hold due to the special tensor product structure of the vectors $|\Psi^{(u)}\>$,  the orthogonality condition in (\ref{eq:projective_orthogonal}), and Lemma~\ref{lem:elementary}.
It follows that these vectors span a $r$-dimensional isotropic subspace with respect to the quadratic form defined by $W\otimes I_d$. 

Using Lemma~\ref{lem:max_iso} and that the inertia of $W\otimes I_d$ is $d$ times the inertia of $W$, we obtain
\[
\frac{r}{d} \le n^0(W) + \min\{ n^+(W) + n^-(W) \}\,,
\]
which completes the proof since this bound holds for all projective packings of $G=(V,E)$ and all Hermitian weighted adjacency matrices $W$ of $G$.
\end{proof}

\section{Eigenvalue upper bound for $\alpha_q(G)$}

Hoffman, in an unpublished paper, proved that for $\Delta$-regular\footnote{We use the unconventional symbol $\Delta$ instead of $d$ for the degree of regular graphs because $d$ is the dimension of the Hilbert space used in the definition of the quantum independence number.} graphs:
\[
\alpha(G) \le \frac {n|\lambda_n|}{\Delta + |\lambda_n|},
\]
where $\lambda_n$ is the smallest eigenvalue of $A$. This result is typically proved using interlacing of the quotient matrix, and is known as the Hoffman bound or ratio bound. 

Lov\'asz (Theorem 9 in  \cite{lovasz79}) proved that for $\Delta$-regular graphs:
\begin{equation}\label{eq:thetaHoffman}
\theta(G) \le \frac {n|\lambda_n|}{\Delta + |\lambda_n|}.
\end{equation}
It is therefore immediate that the Hoffman bound is an upper bound for $\alpha_q(G)$ for regular graphs.

Golubev \cite{golubev17} proved that for \emph{any} graph 
\[
\alpha(G) \le \frac{n(\mu - \delta)}{\mu}\,,
\] 
where $\delta$ is the minimum degree and $\mu$ is the largest eigenvalue of the Laplacian matrix of $G$. This bound equals the Hoffman bound for regular graphs. 

Bachoc \emph{et al} subsequently proved (see section 5 in \cite{bachoc17}), in the context of simplicial complexes, that for any graph:

\[
\theta(G) \le \frac{n(\mu - \delta)}{\mu}.
\] 

It is therefore immediate that the Golubev bound is an upper bound for $\alpha_q(G)$.

\section{Quantum clique number $\omega_q(G)$}

Mancinska and Roberson \cite{mancinska162} also define the quantum clique number where $\omega(G) \le \omega_q(G) = \alpha_q(\overline{G})$. It is straightforward to show that for non-empty graphs $\omega(G) \le \mathrm{rank}(G)$, by interlacing the eigenvalues of $G$ with the eigenvalues of the complete graph on $\omega(G)$ vertices. Alon and Seymour \cite{alon89} used the complement of the folded 7-cube on 64 vertices to demonstrate that $\chi(G) \not \le \mathrm{rank}(G)$, since this graph has $\chi = 32$ and $\mathrm{rank} = 29$. Therefore at some point in the hierarchy of parameters between $\omega$ and $\chi$, the rank of a graph ceases to be an upper bound. We can prove that rank is an upper bound for $\omega_q(G)$ as follows.

\begin{thm}
For any non-empty graph $G$, $\omega_q(G) \le \mathrm{rank}(G)$.
\end{thm}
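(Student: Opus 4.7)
My plan is to invoke $\omega_q(G)=\alpha_q(\overline{G})$ and unpack Definition~\ref{def:alpha_q} applied to $\overline{G}$, obtaining for $t=\omega_q(G)$ projectors $P^{(u,i)}\in\C^{d\times d}$ with $\sum_u P^{(u,i)}=I_d$ and $\<P^{(u,i)},P^{(v,j)}\>_{\mathrm{tr}}=0$ whenever $i\neq j$ and either $u=v$ or $uv\notin E(G)$. The standard fact that $\mathrm{tr}(PQ)=0$ for projectors $P,Q$ forces $PQ=0$ (because $PQP\succeq 0$) upgrades each such trace-orthogonality to the operator identity $P^{(u,i)}P^{(v,j)}=0$. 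Combining completeness with non-negativity also yields $P^{(u,i)}P^{(v,i)}=0$ for all $u\neq v$, so that $\{P^{(u,i)}\}_u$ is a resolution of the identity for each fixed $i$.

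Next I would introduce, for $i\in[t]$ and $k\in[d]$, the $td$ vectors
\[
|\mu_i^{(k)}\>\;=\;\sum_{u\in V(G)}|u\>\otimes P^{(u,i)}|k\>\;\in\;\C^n\otimes\C^d.
\]
Using $\sum_u P^{(u,i)}P^{(u,j)}=\delta_{ij}\,I_d$ one checks $\<\mu_i^{(k)}|\mu_j^{(\ell)}\>=\delta_{ij}\delta_{k\ell}$, so these vectors are orthonormal. The key computation is to evaluate $\<\mu_i^{(k)}|(A\otimes I_d)|\mu_j^{(\ell)}\>$ by splitting the $(u,v)$-sum using the three operator identities $\sum_{u,v}P^{(u,i)}P^{(v,j)}=I_d$, $\sum_u P^{(u,i)}P^{(u,j)}=\delta_{ij}\,I_d$, and the vanishing of every term with $uv\notin E(G)$, $u\neq v$. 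The answer is $(1-\delta_{ij})\delta_{k\ell}$; equivalently, the $(td)\times(td)$ compressed matrix is exactly $(J_t-I_t)\otimes I_d$.

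To finish, since compression to an orthonormal subspace cannot increase rank, $\mathrm{rank}((J_t-I_t)\otimes I_d)\le\mathrm{rank}(A\otimes I_d)$. The non-emptiness of $G$ gives $t\ge 2$ and hence $\mathrm{rank}(J_t-I_t)=t$; the left-hand side is $td$, the right-hand side is $d\cdot\mathrm{rank}(G)$, and cancelling $d$ produces $\omega_q(G)\le\mathrm{rank}(G)$.

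The step I expect to be the main obstacle is precisely the elimination of the factor $d$. The naive one-vector-per-$i$ construction $|\mu_i\>=\sum_u|u\>\otimes P^{(u,i)}|\xi\>$ for a single $|\xi\>$ already produces compressed matrix $J_t-I_t$, but only yields the weaker bound $t\le d\cdot\mathrm{rank}(G)$. Running $|\xi\>$ over a full orthonormal basis of $\C^d$ (the index $k$ above) is what makes an extra factor $I_d$ appear symmetrically on both sides of the rank inequality, so that the spurious $d$ cancels.
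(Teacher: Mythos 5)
Your proof is correct, but it takes a genuinely different route from the paper. The paper's argument is a three-line chain of inertia inequalities: it applies Theorem~\ref{thm:inertial_bound} to $\overline{G}$ to get $\omega_q(G)=\alpha_q(\overline{G})\le n^0(\overline{A})+n^+(\overline{A})=n-n^-(\overline{A})$, and then invokes the external result $n-1\le n^-(A)+n^-(\overline{A})$ from \cite{elphick17} together with $n^+(A)\ge 1$ to conclude $n-n^-(\overline{A})\le 1+n^-(A)\le \mathrm{rank}(G)$. You instead work directly from Definition~\ref{def:alpha_q} applied to $\overline{G}$: you correctly upgrade the trace-orthogonality conditions to operator identities (and correctly note that completeness forces $P^{(u,i)}P^{(v,i)}=0$ for $u\neq v$, which is needed to kill the non-edge terms when $i=j$ since condition~(\ref{eq:orthogonal uv}) only covers $i\neq j$), build the orthonormal family $|\mu_i^{(k)}\rangle$, and verify that compressing $A\otimes I_d$ onto their span yields exactly $(J_t-I_t)\otimes I_d$; rank monotonicity under compression and the cancellation of $d$ (your indexing over a full basis $k\in[d]$ is indeed the essential device here) then give $t\le\mathrm{rank}(G)$, with non-emptiness supplying $t\ge 2$ so that $\mathrm{rank}(J_t-I_t)=t$. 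What each approach buys: the paper's proof is shorter and reuses machinery already established (the inertia bound for $\alpha_q$ plus a complement-inertia inequality), whereas yours is self-contained, bypasses both Theorem~\ref{thm:inertial_bound} and the result of \cite{elphick17}, and replaces the isotropic-subspace/inertia argument with a pure rank argument in the spirit of the pinching constructions of \cite{wocjan18}. All the computational steps you flag check out, including $\sum_{u,v}P^{(u,i)}P^{(v,j)}=I_d$ and $\sum_{u}P^{(u,i)}P^{(u,j)}=\delta_{ij}I_d$.
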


\begin{proof}
Elphick and Wocjan \cite{elphick17} proved that for any graph $G$, $n - 1 \le n^-(A) + n^-(\overline{A})$. Therefore
\[
\omega_q(G) = \alpha_q(\overline{A}) \le n^0(\overline{A}) + n^+(\overline{A}) = n - n^-(\overline{A}) \le 1 + n^-(A) \le \mathrm{rank}(G).
\]
\end{proof}

The complement of the folded 7-cube has $\alpha = 2$ so $n/\alpha \not \le \mathrm{rank}(G)$, and consequently rank is not an upper bound for the fractional chromatic number. We do not know whether rank is an upper bound for $\theta(\overline{G}).$

\section{Implications for $\alpha_q(G)$  and for $\alpha(G)$}

It follows from Theorem~\ref{thm:inertial_bound} that any graph with $\alpha(G) = n^0 + \min{(n^+ , n^-)}$, has $\alpha_q = \alpha$. This is the case for numerous graphs, including odd cycles, perfect, folded cubes, Kneser, Andrasfai, Petersen, Desargues, Grotzsch, Heawood, Clebsch and Higman-Sims graphs. Furthermore if the inertia bound is tight with an appropriately chosen weight matrix than again $\alpha_q = \alpha$. This is the case for all bipartite graphs. There are also many graphs, including Chvatal, Hoffman-Singleton, Flower Snark, Dodecahedron, Frucht, Octahedron, Thomsen, Pappus, Gray, Coxeter and Folkman   for which $\alpha = \lfloor\theta\rfloor$, so again $\alpha_q = \alpha$. For all such graphs there are no benefits from  quantum entanglement for independence. The Clebsch graph demonstrates that the inertia bound is not an upper bound for $\lfloor \theta'(G) \rfloor$. 

Elzinga and Gregory \cite{elzinga10} asked whether there exists a real symmetric weight matrix $W$ for every graph $G$ such that:

\begin{equation}\label{eq:elzinga}
\alpha(G) = n^0(W) + \min{(n^+(W) , n^-(W))}?
\end{equation}

They demonstrated experimentally that this is true for all graphs with up to 10 vertices, and for vertex transitive graphs with up to 12 vertices. Sinkovic \cite{sinkovic18} subsequently proved that there is no real  symmetric weight matrix for which~(\ref{eq:alpha}) is tight for Paley 17.  This leaves open, however,  whether there is always a Hermitian weight matrix for which~(\ref{eq:alpha}) is exact. 

It follows from Theorem~\ref{thm:inertial_bound}, that every graph with $\alpha < \alpha_q$ is a counter-example to~(\ref{eq:elzinga}) for real symmetric and Hermitian weight matrices. This leads to the question of whether ~(\ref{eq:elzinga}) is true for $\alpha_q$ or $\alpha_p$? It follows from Theorem~\ref{thm:inertial_bound_projective_packing} that the answer is no, because for some graphs, such as the line graph of the cartesian product of $K_3$ with itself, the projective packing number is non-integral.

There are also numerous regular graphs for which the Hoffman bound on $\alpha(G)$ is exact, but the unweighted inertia bound is not. Examples include the Shrikhander, Tesseract, Hoffman and Cuboctahedral graphs. There are also many regular graphs where the floor of the Hoffman bound is exact, but the unweighted inertia bound is not. Examples include some circulant, cubic and quartic graphs. For all of these graphs, $\alpha_q = \alpha$.

It would be interesting to find the graph with the smallest number of vertices that has $\alpha(G) < \alpha_q(G)$. Such a graph must have at least 11 vertices (given the experimental results due to Elzinga and Gregory).  The smallest such graph that we know of is due to Piovesan (see Figure 3.1 in \cite{piovesan16}) which has 24 vertices, with $\chi = \alpha =5, \chi_q = 4$ and $\alpha_q \ge 6$.

\section{An example}
We conclude by using the Clebsch graph to illustrate the parameters and bounds described in this paper. As is well known, the Clebsch graph is strongly regular and vertex transitive with 16 vertices. It has $\omega = 2, \chi = 4, \alpha = 5$ and spectrum $(5^1, 1^{10}, -3^5)$. The inertia bound is therefore exact, so $\alpha_q = \alpha_p = 5$. It also has $\theta' = \theta = \theta^+ = 6$. Not only is the inertia bound exact for $\alpha_q$ but the Hoffman bound in (\ref{eq:thetaHoffman}) is exact for $\theta$. Because the graph is vertex transitive:
\[
\alpha(G) \chi_f(G) = \theta(G) \theta(\overline{G}) = \theta'(G) \theta^+(\overline{G}) =  \theta^+(G) \theta'(\overline{G}) = \alpha_p(G) \xi_f(G) = n,
\]
where $\chi_f$ is the fractional chromatic number and $\xi_f$ is the projective rank. All of these equalities are well known apart from the last one, which is due to Roberson \cite{roberson13}. Therefore $\chi_f = \xi_f = 3.2$ and $\theta'(\overline{G}) = \theta(\overline{G}) = \theta^+(\overline{G}) = 2.67.$ Finally, it is demonstrated in \cite{wocjan18} that $\chi_q = 4$.

\subsection*{Acknowledgements}

We would like to thank David Roberson for helpful comments on an earlier version of this paper, in particular in regard to the projective packing number. We would also like to thank David Anekstein for testing various ideas for this paper experimentally.

This research has been supported in part by National Science Foundation Award 1525943.

\end{document}